\documentclass[letterpaper,conference]{ieeeconf}
\IEEEoverridecommandlockouts                              % This command is only
\usepackage[utf8]{inputenc}
\usepackage{textcomp}

\usepackage{graphicx}
\usepackage{tikz}
\usetikzlibrary{arrows.meta,calc,positioning}
\usepackage{amsmath}
\usepackage[version=4]{mhchem}
\usepackage{siunitx}
\usepackage{graphicx}
\usepackage{longtable,tabularx}
\usepackage[style=ieee ,sorting=none,maxbibnames=99,giveninits, doi=false]{biblatex}
\usepackage{amsmath} % assumes amsmath package installed
\usepackage{amsfonts}
\usepackage{amsthm}

\newtheorem{proposition}{Proposition}[section]

\theoremstyle{definition}

\usepackage{float} 
\usepackage[skip=0em,font=footnotesize]{caption}
\usepackage[margin = 1in]{geometry}

\usepackage{tikz}
\usetikzlibrary{shapes,arrows,calc,positioning}
\tikzstyle{bigblock} = [draw, fill=blue!20, rectangle, 
    minimum height=6em, minimum width=8em]
\tikzstyle{medblock} = [draw, fill=blue!20, rectangle, 
    minimum height=4em, minimum width=4em]    
\tikzstyle{mux} = [draw, fill=black!20, rectangle, 
    minimum height=5em, minimum width=0.1em]    
\tikzstyle{smallblock} = [draw, fill=blue!20, rectangle, 
    minimum height=2em, minimum width=3em]
    
\tikzstyle{data_block} = [draw, fill=green!20, rectangle, 
    minimum height=2em, minimum width=3em]
\tikzstyle{ops_block} = [draw, fill=blue!20, rectangle, 
    minimum height=2em, minimum width=3em]    
\tikzstyle{est_block} = [draw, fill=red!20, rectangle, 
    minimum height=2em, minimum width=3em]    
    
\tikzstyle{sum} = [draw, fill=blue!20, circle, node distance=1cm,minimum height=0.5cm]
\tikzstyle{signal} = [coordinate]
\tikzstyle{pinstyle} = [pin edge={to-,thin,black}]
\tikzstyle{block} = [draw, fill=blue!20, rectangle, 
    minimum height=3em, minimum width=9em]
\tikzstyle{blockS} = [draw, fill=blue!20, rectangle, 
    minimum height=3em, minimum width=4em]    
\tikzstyle{input} = [coordinate]
\tikzstyle{output} = [coordinate]
\usetikzlibrary{matrix}

\usetikzlibrary{positioning}
\usetikzlibrary{math}

\usepackage{hyperref}
\usepackage{xcolor}
\hypersetup{
    colorlinks,
    linkcolor={blue!100!black},
    citecolor={blue!50!black},
    urlcolor={blue!80!black}
}

\newcommand{\bc}{\begin{center}}
\newcommand{\ec}{\end{center}}
\newcommand{\benum}{\begin{enumerate}}
\newcommand{\eenum}{\end{enumerate}}
\newcommand{\nn}{\nonumber}
\newcommand{\matl}{\left[ \begin{array}}
\newcommand{\matr}{\end{array} \right]}

\renewcommand{\matl}{\begin{bmatrix}}
\renewcommand{\matr}{\end{bmatrix}}

\newcommand{\matls}{\left[ \begin{smallmatrix}}
\newcommand{\matrs}{\end{smallmatrix} \right]}
\newcommand{\isdef}{\stackrel{\triangle}{=}}

\newcommand{\BBR}{{\mathbb R}}

\usepackage{amssymb}
\usepackage{pifont}

\title{Guranteed Constrained Interception}
\title{Guaranteed Rear-Approach Interception through \\ Constrained Approach Geometry}
\title{Constraint-Lifted Sector-Constrained Guidance \\ for Rear-Approach Interception}
\author{
Jhon Manuel Portella Delgado,
Dennis S Bernstein,
and
Ankit Goel
\thanks{Jhon Manuel Portella Delgado and Dennis S. Bernstein are with the Department of Aerospace Engineering at The University of Michigan, Ann Arbor, 1320 Beal Avenue, Ann Arbor, MI 48109. {\tt\small jhonp@umich.edu, dsbaero@umich.edu}}%
\thanks{Ankit Goel is with the Department of Mechanical Engineering, University of Maryland, Baltimore County,1000 Hilltop Circle, Baltimore, MD 21250. {\tt\small ankgoel@umbc.edu }}%
}

\begin{document}

\maketitle

%%%%%%%%%%%%%%%%%%%%%%%%%%%%%%%%%%%%%%%%%%%%%%%%%%%%%%%%%%%%%%%%%%%%%%%%%%%%%%%%
\begin{abstract}
This paper develops a sector-constrained guidance (SCG) law for interception of a maneuvering target while maintaining a prescribed approach geometry.
The line of sight is required to remain within a specified angular sector behind the target throughout the engagement, resulting in a state-constrained guidance problem.
To enforce this constraint, the directional angle between the target heading and the line of sight is mapped from the prescribed sector to an unconstrained coordinate using a sigmoid diffeomorphism.
A guidance law is then designed in the lifted coordinate to generate a desired transverse velocity and corresponding pursuer lead angle.
The desired lead angle is tracked through an inner-loop turn-rate controller, which is implemented through the normal-acceleration command of a nonlinear point-mass model.
This construction embeds the sector constraint directly into the guidance law and does not require online constrained optimization.
Numerical simulations against a maneuvering target show that SCG achieves interception while maintaining the prescribed rear-sector constraint, including when the pursuer normal-acceleration command is subject to actuator saturation.
Comparisons with proportional guidance under the same engagement conditions demonstrate that proportional guidance may achieve interception but does not maintain the prescribed directional constraint.
\end{abstract}

% \keywords{one, two, three, four}
% \textit{\bf keywords:} adaptive backstepping control, dynamic extension, bicopter, multicopter.

%%%%%%%%%%%%%%%%%%%%%%%%%%%%%%%%%%%%%%%%%%%%%%%%%%%%%%%%%%%%%%%%%%%%%%%%%%%%%%%%
% \section{INTRODUCTION}

%%%% Do not delete.
% % \textit{The introduction consists of three subsections. Each subsection is essentially an answer to the following questions.}

% % \begin{enumerate}
% %     \item 
% %     What is the problem?
% %     Why is it important?
    
% %     \item 
% %     What is the state of the art in solving this problem?
% %     Provide a short literature survey. 

% %     \item 
% %     What is your approach to solving the problem?
% %     How is your approach different?
% %     What is the key contribution?

% \end{enumerate}

% List the organization of the paper. 

\section{Introduction}
Guidance of a pursuer toward a maneuvering target is a classical problem in aerospace control.
The conventional objective is interception, that is, reducing the pursuer--target separation to zero or to a prescribed capture radius.
Proportional guidance (PG) remains one of the most widely used guidance laws because of its simple structure and low computational requirements \cite{Zarchan2012,Palumbo2010}.
However, interception alone does not prescribe the direction from which the pursuer approaches the target.
In applications where the direction of approach is important, the guidance objective must therefore include both interception and a requirement on the relative engagement geometry.
The specific problem considered in this paper is to guide a pursuer to intercept a maneuvering target while maintaining the line of sight inside a prescribed angular sector behind the target throughout the engagement.
Thus, the problem is not only to achieve capture, but also to enforce a state constraint on the approach geometry.
This requirement leads to a sector-constrained interception problem, as illustrated in Figure~\ref{fig:directional_pursuit_geometry}.

Guidance laws that prescribe the geometry at interception have been
investigated extensively.
In particular, impact-angle guidance augments the interception
objective by prescribing the terminal direction from which a pursuer
or missile reaches a target.
Ratnoo and Ghose developed an impact-angle-constrained guidance law
for interception of stationary targets and derived the corresponding
feasible impact-angle conditions \cite{Ratnoo2008}.
More recently, Krstic developed pursuit and evasion feedback laws for
nonholonomic vehicles with a prescribed terminal direction of
approach, including interception from behind the evader
\cite{Krstic2026Directional}.
These formulations prescribe the approach geometry at interception,
whereas the problem considered here requires the pursuer to remain
within a prescribed rear-approach section throughout the entire engagement with
a maneuvering target.

\begin{figure}[t]
\centering
\resizebox{1\columnwidth}{!}{%
\begin{tikzpicture}[
    >=Latex,
    line cap=round,
    line join=round,
    font=\small
]

% ============================================================
% PARAMETERS (degrees, lengths)
% ============================================================

\def\thT{15}     % target flight-path angle
\def\thP{55}     % pursuer flight-path angle
\def\dlt{10}     % directional angle delta
\def\dmax{35}    % half-width of rear-approach section
\def\rng{3.8}    % range rho
\def\Rs{4.5}     % drawn radius of rear-approach section
\def\rc{0.5}     % capture radius

\pgfmathsetmacro{\phiLOS}{\thT+\dlt}
\pgfmathsetmacro{\rearL}{\thT+180-\dmax}
\pgfmathsetmacro{\rearR}{\thT+180+\dmax}

% ============================================================
% COORDINATES
% ============================================================

\coordinate (T) at (0,0);
\coordinate (P) at ($(T)+({\phiLOS+180}:\rng)$);

% ============================================================
% DIRECTIONAL SAFE SET
% ============================================================

\fill[
    green!14
]
    (T)
    --
    ++(\rearL:\Rs)
    arc[
        start angle=\rearL,
        end angle=\rearR,
        radius=\Rs
    ]
    --
    cycle;

% ============================================================
% SAFE-SET BOUNDARIES
% ============================================================

\draw[
    dashed,
    thick,
    red!65
]
    (T)
    --
    ++(\rearL:{\Rs-0.5});

\draw[
    dashed,
    thick,
    red!65
]
    (T)
    --
    ++(\rearR:{\Rs-0.5});

% ============================================================
% DIRECTIONAL-CONSTRAINT LABELS
% upper ray: pursuer above-behind target, delta = -delta_max
% lower ray: pursuer below-behind target, delta = +delta_max
% ============================================================

\node[
    anchor=south east,
    inner sep=1pt
]
    at ($(T)+(\rearL-5:2.7)$)
    {$-\delta_{\max}$};

\node[
    anchor=north west,
    inner sep=1pt
]
    at ($(T)+(\rearR:2.3)$)
    {$\delta_{\max}$};

\node[
    inner sep=1pt
]
    at ($(T)+({\thT+160}:3.4)$)
    {$|\delta|<\delta_{\max}$};

% ============================================================
% CAPTURE REGION
% ============================================================

\fill[
    gray!15
]
    (T)
    circle (\rc);

\draw[
    dashed,
    thick,
    gray!75
]
    (T)
    circle (\rc);

\draw[
    ->,
    thin
]
    (T)
    --
    ++(80:\rc);

\node[
    anchor=south,
    inner sep=1pt
]
    at ($(T)+(80:{\rc+0.05})$)
    {$\rho_{\rm c}$};

% ============================================================
% LINE OF SIGHT
% ============================================================

\draw[
    ->,
    thick
]
    (P)
    --
    (T);

\node[
    fill=green!7,
    inner sep=1pt
]
    at ($(P)!0.5!(T)+({\phiLOS+90}:0.22)$)
    {$\rho$};

% ============================================================
% TARGET VELOCITY AND FLIGHT-PATH ANGLE
% ============================================================

\draw[
    dashed,
    gray!65,
    thin
]
    (T)
    --
    ++(1.9,0);

\draw[
    ->,
    very thick
]
    (T)
    --
    ++(\thT:1.6)
    node[
        above=1pt
    ]
    {$V_{\rm T}$};

\draw[
    ->,
    thin
]
    ($(T)+(0:1.15)$)
    arc[
        start angle=0,
        end angle=\thT,
        radius=1.15
    ];

\node[
    anchor=west,
    inner sep=1pt
]
    at ($(T)+({\thT/2}:1.3)$)
    {$\theta_{\rm T}$};

% ============================================================
% PURSUER VELOCITY AND FLIGHT-PATH ANGLE
% ============================================================

\draw[
    dashed,
    gray!65,
    thin
]
    (P)
    --
    ++(2.0,0);

\draw[
    ->,
    very thick,
    blue!70!black
]
    (P)
    --
    ++(\thP:1.5)
    node[
        above left=0pt
    ]
    {$V_{\rm P}$};

\draw[
    ->,
    thin
]
    ($(P)+(0:0.7)$)
    arc[
        start angle=0,
        end angle=\thP,
        radius=0.7
    ];

\node[
    inner sep=1pt
]
    at ($(P)+(10:0.95)$)
    {$\theta_{\rm P}$};

% ============================================================
% PURSUER HEADING ERROR
% measured from the pursuer heading to the line of sight
% ============================================================

\draw[
    ->,
    thin,
    blue!70!black
]
    ($(P)+(\thP:1.1)$)
    arc[
        start angle=\thP,
        end angle=\phiLOS,
        radius=1.1
    ];

\node[
    inner sep=1pt,
    blue!70!black
]
    at ($(P)+({(\thP+\phiLOS)/2}:1.35)$)
    {$\gamma$};

% ============================================================
% LINE-OF-SIGHT ANGLE
% measured from the horizontal to the line of sight
% ============================================================

\draw[
    ->,
    thin
]
    ($(P)+(0:1.6)$)
    arc[
        start angle=0,
        end angle=\phiLOS,
        radius=1.6
    ];

\node[
    inner sep=1pt
]
    at ($(P)+(7:1.85)$)
    {$\phi$};

% ============================================================
% DIRECTIONAL ANGLE
% measured at the pursuer from the target-heading direction
% to the line of sight
% ============================================================

\draw[
    dashed,
    gray!65,
    thin
]
    (P)
    --
    ++(\thT:3.0);

\draw[
    ->,
    thick
]
    ($(P)+(\thT:2.3)$)
    arc[
        start angle=\thT,
        end angle=\phiLOS,
        radius=2.3
    ];

\node[
    inner sep=1pt
]
    at ($(P)+({\thT+\dlt/2}:2.55)$)
    {$\delta$};

% ============================================================
% PURSUER
% ============================================================

\filldraw[
    fill=blue!65,
    draw=black,
    thick
]
    (P)
    circle (0.11);

\node[
    below=4pt
]
    at (P)
    {$p_{\rm P}$};

% ============================================================
% TARGET
% ============================================================

\filldraw[
    fill=black,
    draw=black
]
    ($(T)+(-0.10,-0.10)$)
    rectangle
    ($(T)+(0.10,0.10)$);

\node[
    anchor=north west,
    inner sep=1pt
]
    at ($(T)+(-50:{\rc+0.05})$)
    {$p_{\rm T}$};

\end{tikzpicture}
}
\caption{Sector-constrained interception geometry, velocities, flight-path angles, and prescribed rear-approach section.}
\label{fig:directional_pursuit_geometry}
\end{figure}

Maintaining the engagement geometry within a prescribed sector makes
the interception problem a state-constrained control problem.
State constraints are commonly formulated in terms of forward
invariance of a prescribed safe set.
Control barrier functions (CBFs) provide a systematic framework for
enforcing forward invariance through pointwise conditions on the
control input \cite{Ames2017,Ames2019}.
For constraints with higher relative degree, additional constructions
are required to obtain conditions that can be imposed on the control
input.
Prescribed-time safety for chains of integrators and strict-feedback
nonlinear systems has also been developed by combining backstepping
with control barrier functions and quadratic programming
\cite{Abel2022,Abel2024}.
These formulations enforce the state constraint through barrier
conditions and, in the quadratic-programming implementations,
require the corresponding constrained optimization problem to be
solved online \cite{Abel2022,Abel2024}.

An alternative method for enforcing state constraints is constraint
lifting.
In this approach, a state constrained to an open interval is
represented by an unconstrained lifted coordinate through a
diffeomorphic transformation.
A controller is then designed in the lifted coordinates such that
boundedness of the lifted state implies that the original state
remains in its prescribed interval.
A sigmoid-based constraint-lifting framework for nonlinear systems
was developed in \cite{PortellaGoel2026Constraint}, where the
resulting explicit control law guarantees asymptotic stability and
forward invariance of the prescribed constraint set.
The framework has subsequently been extended to nonlinear systems
with parametric uncertainty \cite{PortellaGoel2026Adaptive} and to
discrete-time nonlinear systems \cite{PortellaGoel2026Discrete}.
These results motivate the use of constraint lifting for
sector-constrained interception, since the prescribed rear-approach section can
be represented directly as an interval constraint on the relative
engagement angle.

In this paper, constraint lifting is used to construct a
sector-constrained guidance law for interception of a maneuvering
target using nonlinear point-mass dynamics.
The angle between the target heading and the line of sight is
constrained to a prescribed interval that defines the admissible rear
sector.
A sigmoid diffeomorphism maps this constrained directional error to
an unconstrained coordinate, and the resulting lifted dynamics are
used to construct the desired transverse velocity of the pursuer.
The corresponding desired lead angle is tracked through the pursuer
turn-rate dynamics and implemented through the normal-acceleration
input of the nonlinear point-mass model.
The resulting guidance architecture therefore combines an outer
constraint-regulation loop with an inner lead-angle tracking loop.
Capture is defined by the pursuer entering a prescribed neighborhood
of the target, while the directional constraint is required to remain
satisfied throughout the engagement until capture.

The main contribution of this paper is the development of an explicit
sector-constrained guidance law for interception of a maneuvering
target using constraint lifting.
Unlike terminal impact-angle guidance, the prescribed approach-sector constraint is enforced throughout the engagement rather than only at capture.
The constraint is embedded directly in the lifted coordinate, and the
resulting guidance law does not require an online quadratic program.
Numerical simulations using nonlinear point-mass dynamics with a
maneuvering target and actuator-limited normal acceleration are used
to evaluate both interception and constraint satisfaction.
The proposed SCG law is compared with conventional PG under the same
physical engagement conditions.

The paper is organized as follows.
Section~\ref{sec:problem_formulation} formulates the sector-constrained
interception problem and defines the prescribed rear-sector constraint.
Section~\ref{sec:method} develops the constraint-lifted guidance law
and its implementation using nonlinear point-mass dynamics.
Section~\ref{sec:numerical_simulation} presents numerical simulations
and comparisons with proportional guidance.
Finally, Section~\ref{sec:conclusions} summarizes the results and
discusses directions for future work.

% \newpage
% \clearpage
\section{Sector-Constrained Interception Problem}
\label{sec:problem_formulation}

\subsection{Engagement Dynamics}

The pursuer and target are modeled using the planar nonlinear point-mass dynamics developed in \cite{DorseyGoel2025}.
For $i\in\{{\rm P},{\rm T}\}$, let $p_i\in\BBR^2$ denote the position, $V_i>0$ the speed, and $\theta_i$ the flight-path angle, that is, the angle from the horizontal axis to the velocity vector.
The dynamics are
\begin{align}
    \dot p_i
    &=
    V_i
    \begin{bmatrix}
        \cos\theta_i\\
        \sin\theta_i
    \end{bmatrix},
    \label{eq:position_dynamics}
    \\
    \dot V_i
    &=
    \frac{T_i-D_i}{m_i}
    -
    g\sin\theta_i,
    \label{eq:speed_dynamics}
    \\
    \dot\theta_i
    &=
    -\frac{
        n_{z,i}+g\cos\theta_i
    }{
        V_i
    },
    \label{eq:heading_dynamics}
\end{align}
where $m_i$, $T_i$, $D_i$, and $n_{z,i}$ denote the mass, thrust, drag, and normal acceleration, respectively, and $g$ denotes the gravitational acceleration.
The flight-path-angle rate is denoted by $\omega_i\isdef\dot\theta_i$.

\subsection{Relative Engagement Dynamics}

Let $r\isdef p_{\rm T}-p_{\rm P}$ denote the relative position from the pursuer to the target, let $\rho\isdef\|r\|$ denote the range, and let $\phi\isdef\operatorname{atan2}(r_2,r_1)$ denote the line-of-sight angle.
Define the directional angle by
\begin{align}
    \delta
    \isdef
    \operatorname{wrap}
    \left(
        \phi-\theta_{\rm T}
    \right),
    \label{eq:directional_angle}
\end{align}
where $\operatorname{wrap}(\alpha)$ denotes the principal-angle wrapping operation\footnote{
$\operatorname{wrap}(\alpha)\isdef
\operatorname{atan2}(\sin\alpha,\cos\alpha)$.
},
and define the lead angle, that is, the angle from the pursuer velocity vector to the line of sight, by
\begin{align}
    \gamma
    \isdef
    \operatorname{wrap}
    \left(
        \phi-\theta_{\rm P}
    \right).
    \label{eq:pursuer_heading_error}
\end{align}
The engagement geometry is shown in Figure~\ref{fig:directional_pursuit_geometry}.
Since the line of sight points from the pursuer to the target, $\delta=0$ implies $\phi=\theta_{\rm T}$, which corresponds to the pursuer located directly behind the target along the direction defined by the target flight-path angle.

Differentiating $r$ along \eqref{eq:position_dynamics} yields the range and line-of-sight dynamics
\begin{align}
    \dot\rho
    &=
    V_{\rm T}\cos\delta
    -
    V_{\rm P}\cos\gamma,
    \label{eq:relative_range_dynamics}
    \\
    \dot\phi
    &=
    \frac{
        V_{\rm P}\sin\gamma
        -
        V_{\rm T}\sin\delta
    }{
        \rho
    }.
    \label{eq:los_rate}
\end{align}
Since $\dot\delta=\dot\phi-\omega_{\rm T}$ and $\dot\gamma=\dot\phi-\omega_{\rm P}$ away from the discontinuities of the wrapping operation,
\begin{align}
    \dot\delta
    &=
    \frac{
        V_{\rm P}\sin\gamma
        -
        V_{\rm T}\sin\delta
    }{
        \rho
    }
    -
    \omega_{\rm T},
    \label{eq:directional_angle_dynamics}
    \\
    \dot\gamma
    &=
    \frac{
        V_{\rm P}\sin\gamma
        -
        V_{\rm T}\sin\delta
    }{
        \rho
    }
    -
    \omega_{\rm P}.
    \label{eq:pursuer_heading_error_dynamics}
\end{align}

\subsection{Control Problem}

Let $\rho_{\rm c}>0$ denote the capture radius, and define the capture time by $t_{\rm c}\isdef\inf\{t\geq0:\rho(t)\leq\rho_{\rm c}\}$.
Let $\delta_{\max}\in(0,\pi/2)$ define the prescribed rear-approach section
\begin{align}
    |\delta(t)|
        <
    \delta_{\max}.
    \label{eq:directional_constraint}
\end{align}
The initial conditions satisfy $\rho(0)>\rho_{\rm c}$ and $|\delta(0)|<\delta_{\max}$.
The pursuer and target states $p_i$, $V_i$, and $\theta_i$ and the target turn rate $\omega_{\rm T}$ are available for feedback.
The control problem is to design the normal-acceleration command $n_{z,{\rm P}}$ such that $t_{\rm c}<\infty$ and \eqref{eq:directional_constraint} holds for all $t\in[0,t_{\rm c}]$.
No intermediate rear-following distance is prescribed, and the pursuer approaches the target from within the rear-approach section until capture.

% \section{Constraint-Lifted Interception Guidance}
\section{Sector-Constrained Guidance}
\label{sec:method}

This section develops the sector-constrained guidance law for interception of a maneuvering target.
The directional constraint is mapped to an unconstrained lifted coordinate, and a desired lead angle is synthesized to stabilize this coordinate.
Since this desired lead angle cannot be differentiated using the available feedback, it is tracked through a command filter and realized via a normal-acceleration command.
The resulting closed-loop guidance law is then shown to satisfy the directional constraint and produce closing motion toward capture.

\subsection{Directional Constraint Lifting}

To enforce the directional constraint \eqref{eq:directional_constraint} without recourse to online constrained optimization, $\delta$ is mapped to an unconstrained coordinate through a diffeomorphic transformation, so that a guidance law can instead be designed to keep the transformed coordinate bounded.

Define the directional safe set
\begin{align}
    \mathcal{S}_{\delta}
    \isdef
    \left\{
        \delta\in\BBR:
        |\delta|<\delta_{\max}
    \right\},
    \label{eq:directional_safe_set}
\end{align}
so that \eqref{eq:directional_constraint} is equivalent to $\delta(t)\in\mathcal{S}_{\delta}$.
Define the normalized directional angle $\chi\isdef\delta/\delta_{\max}$, so that $\delta\in\mathcal{S}_{\delta}$ if and only if $|\chi|<1$.
The lifted directional coordinate is defined by
\begin{align}
    z
    \isdef
    \delta_{\max}
    \operatorname{atanh}
    \left(
        \frac{\delta}{\delta_{\max}}
    \right).
    \label{eq:lifted_directional_coordinate}
\end{align}
The map \eqref{eq:lifted_directional_coordinate} is a diffeomorphism from $\mathcal{S}_{\delta}$ onto $\BBR$, with inverse
\begin{align}
    \delta
    =
    \delta_{\max}
    \tanh
    \left(
        \frac{z}{\delta_{\max}}
    \right).
    \label{eq:inverse_directional_transformation}
\end{align}
This map is an instance of the sigmoid-based constraint-lifting transformations developed in \cite{PortellaGoel2026Constraint}.
Since $\tanh$ maps $\BBR$ into $(-1,1)$, every finite $z$ corresponds to $\delta\in\mathcal{S}_{\delta}$, and $|z|\rightarrow\infty$ as $|\delta|\rightarrow\delta_{\max}$.
Hence, if $z$ remains finite on $[0,t_{\rm c}]$, then \eqref{eq:directional_constraint} holds on $[0,t_{\rm c}]$.
Furthermore, $|z|\leq\bar z$ implies $|\delta|\leq\delta_{\max}\tanh(\bar z/\delta_{\max})<\delta_{\max}$.
The scaling by $\delta_{\max}$ in \eqref{eq:lifted_directional_coordinate} yields $\partial z/\partial\delta=1$ at $\delta=0$, so that $z\approx\delta$ near the center of the rear-approach section.

\subsection{Sector-Constrained Guidance Synthesis}
Having established that boundedness of $z$ is sufficient for the directional constraint to hold, a guidance law is now designed to stabilize $z$.
Differentiating \eqref{eq:lifted_directional_coordinate} and using \eqref{eq:directional_angle_dynamics} yields
\begin{align}
    \dot z
    =
    \frac{1}{
        1-\chi^2
    }
    \left[
        \frac{
            V_{\rm P}\sin\gamma
            -
            V_{\rm T}\sin\delta
        }{
            \rho
        }
        -
        \omega_{\rm T}
    \right].
    \label{eq:lifted_directional_dynamics}
\end{align}
The normal-acceleration command $n_{z,{\rm P}}$ does not appear in \eqref{eq:lifted_directional_dynamics}.
Instead, substituting \eqref{eq:heading_dynamics} into \eqref{eq:pursuer_heading_error_dynamics} yields
\begin{align}
    \dot\gamma
    =
    \dot\phi
    +
    \frac{
        n_{z,{\rm P}}+g\cos\theta_{\rm P}
    }{
        V_{\rm P}
    },
    \label{eq:lead_angle_dynamics_nz}
\end{align}
so that $n_{z,{\rm P}}$ enters the lead-angle dynamics and $\gamma$ enters the lifted dynamics \eqref{eq:lifted_directional_dynamics}.
The engagement thus has the cascade structure $n_{z,{\rm P}}\rightarrow\gamma\rightarrow z$, in which $\gamma$ acts as a virtual input to \eqref{eq:lifted_directional_dynamics}.
The guidance law is designed in two steps.
This subsection determines a desired lead angle $\gamma_{\rm d}$ that yields prescribed lifted dynamics, and Section~\ref{subsec:lead_angle_tracking} designs $n_{z,{\rm P}}$ so that $\gamma$ tracks a filtered version of $\gamma_{\rm d}$.
This construction follows the dynamic surface control and command-filtered backstepping approaches in \cite{swaroop2000dynamic,farrell2009command}.

The quantity $V_{\rm P}\sin\gamma$ is the component of the pursuer velocity normal to the line of sight.
Treating $V_{\rm P}\sin\gamma$ as a virtual control input to \eqref{eq:lifted_directional_dynamics}, let $k_z>0$, and define the desired normal velocity
\begin{align}
    \nu_{\rm d}
    \isdef
    V_{\rm T}\sin\delta
    +
    \rho\omega_{\rm T}
    -
    \rho k_z
    \left(
        1-\chi^2
    \right)z.
    \label{eq:desired_virtual_control}
\end{align}
Physically, $\nu_{\rm d}$ is the value the pursuer's normal velocity $V_{\rm P}\sin\gamma$ must take for the lifted state to decay according to \eqref{eq:desired_lifted_dynamics}.
The first two terms of \eqref{eq:desired_virtual_control} cancel the target-induced terms in \eqref{eq:lifted_directional_dynamics}, and the third term provides feedback on $z$.
If $V_{\rm P}\sin\gamma=\nu_{\rm d}$, then \eqref{eq:lifted_directional_dynamics} becomes
\begin{align}
    \dot z
    =
    -k_z z.
    \label{eq:desired_lifted_dynamics}
\end{align}
Since $\gamma$, not $V_{\rm P}\sin\gamma$ directly, is the quantity commanded through \eqref{eq:lead_angle_dynamics_nz}, the virtual control target $\nu_{\rm d}$ must be inverted to obtain a desired lead angle.
Provided that
\begin{align}
    |\nu_{\rm d}|
    <
    V_{\rm P},
    \label{eq:virtual_control_admissibility}
\end{align}
the desired lead angle is defined by
\begin{align}
    \gamma_{\rm d}
    \isdef
    \arcsin
    \left(
        \frac{\nu_{\rm d}}{V_{\rm P}}
    \right).
    \label{eq:desired_gamma}
\end{align}
The equation $V_{\rm P}\sin\gamma=\nu_{\rm d}$ has two solutions in $(-\pi,\pi]$, and the principal branch in \eqref{eq:desired_gamma} yields $\cos\gamma_{\rm d}>0$.
Consequently, the term $-V_{\rm P}\cos\gamma_{\rm d}$ in \eqref{eq:relative_range_dynamics} is negative and decreases the range.

Since $\gamma\neq\gamma_{\rm d}$ in general, \eqref{eq:lifted_directional_dynamics} is written as
\begin{align}
    \dot z
    =
    -k_z z
    +
    \frac{V_{\rm P}}{
        \rho\left(1-\chi^2\right)
    }
    \left(
        \sin\gamma-\sin\gamma_{\rm d}
    \right).
    \label{eq:perturbed_lifted_dynamics}
\end{align}
Thus, \eqref{eq:desired_lifted_dynamics} holds only if $\gamma=\gamma_{\rm d}$, and the second term in \eqref{eq:perturbed_lifted_dynamics} is the perturbation induced by the lead-angle tracking error, which Proposition~\ref{prop:constraint_satisfaction} bounds.

\subsection{Lead-Angle Tracking and Normal-Acceleration Control}
\label{subsec:lead_angle_tracking}

The constraint-lifted synthesis provides the desired lead angle
$\gamma_{\rm d}$ in \eqref{eq:desired_gamma}.
Direct tracking of $\gamma_{\rm d}$ would require its time derivative
$\dot\gamma_{\rm d}$ in the normal-acceleration command.
Differentiating \eqref{eq:desired_gamma} yields
\begin{align}
    \dot\gamma_{\rm d}
    =
    \frac{1}{
        \sqrt{1-(\nu_{\rm d}/V_{\rm P})^2}
    }
    \left(
        \frac{\dot\nu_{\rm d}}{V_{\rm P}}
        -
        \frac{\nu_{\rm d}\dot V_{\rm P}}{V_{\rm P}^2}
    \right),
    \label{eq:gamma_d_dot}
\end{align}
where, from \eqref{eq:desired_virtual_control},
\begin{align}
    \dot\nu_{\rm d}
    &=
    \dot V_{\rm T}\sin\delta
    +
    V_{\rm T}\cos\delta\,\dot\delta
    +
    \dot\rho\,\omega_{\rm T}
    \nn \\ &\quad 
    +
    \rho\,\dot\omega_{\rm T}
    -
    \rho k_z
    \frac{d}{dt}
    \left[
        \left(1-\chi^2\right)z
    \right].
    \label{eq:nu_d_dot}
\end{align}
Thus, $\dot\gamma_{\rm d}$ depends on the target acceleration
$\dot V_{\rm T}$ and angular acceleration $\dot\omega_{\rm T}$, and on
the pursuer acceleration $\dot V_{\rm P}$, none of which is available
for feedback under the measurement set in
Section~\ref{sec:problem_formulation}.
To avoid requiring these derivatives, $\gamma_{\rm d}$ is passed through
the first-order command filter
\begin{align}
    \dot\gamma_{\rm c}
    =
    \lambda
    \left(
        \gamma_{\rm d}
        -
        \gamma_{\rm c}
    \right),
    \label{eq:heading_command_filter}
\end{align}
where $\lambda>0$ and $\gamma_{\rm c}(0)=\gamma_{\rm d}(0)$.
Thus, $\gamma_{\rm c}$ and $\dot\gamma_{\rm c}$ are available directly
from the filter without differentiating $\gamma_{\rm d}$.

Define the lead-angle-tracking error as $e_\gamma\isdef\gamma-\gamma_{\rm c}$.
% Differentiating yields
% \begin{align}
%     \dot e_\gamma
%     =
%     \dot\gamma
%     -
%     \dot\gamma_{\rm c}.
%     \label{eq:heading_tracking_error_def}
% \end{align}
%
Since $\gamma=\phi-\theta_{\rm P}$ away from the discontinuities of the wrapping operation, $\dot\gamma=\dot\phi-\dot\theta_{\rm P}$, and thus
\begin{align}
    \dot e_\gamma
    =
    \dot\phi
    -
    \dot\theta_{\rm P}
    -
    \dot\gamma_{\rm c}.
    \label{eq:heading_tracking_error}
\end{align}
%
% The nominal error dynamics $\dot e_\gamma=-k_\gamma e_\gamma$, where
% $k_\gamma>0$, are obtained by requiring the right-hand side of
% \eqref{eq:heading_tracking_error} to equal $-k_\gamma e_\gamma$, that is,
% \begin{align}
%     \dot\phi
%     -
%     \dot\theta_{\rm P}
%     -
%     \dot\gamma_{\rm c}
%     =
%     -k_\gamma e_\gamma.
%     \label{eq:nominal_error_condition}
% \end{align}
%
% Solving \eqref{eq:nominal_error_condition} for $\dot\theta_{\rm P}$ yields the 
If pursuer turn rate satisfies 
\begin{align}
    \dot\theta_{\rm P}
    =
    \dot\phi
    -
    \dot\gamma_{\rm c}
    +
    k_\gamma e_\gamma,
    \label{eq:required_pursuer_turn_rate}
\end{align}
then
\begin{align}
    \dot e_\gamma=-k_\gamma e_\gamma.
    \label{eq:e_gamma_dynamics}
\end{align}
Letting $k_\gamma>0$ ensures $e_\gamma(t) =e_\gamma(0)e^{-k_\gamma t} \to 0.$

Finally, equating \eqref{eq:required_pursuer_turn_rate} with the pursuer flight-path-angle dynamics \eqref{eq:heading_dynamics} yields the normal-acceleration command
\begin{align}
    n_{z,{\rm P}}
    =
    -V_{\rm P}
    \left(
        \dot\phi
        -
        \dot\gamma_{\rm c}
        +
        k_\gamma e_\gamma
    \right)
    -
    g\cos\theta_{\rm P}.
    \label{eq:pursuer_normal_acceleration_command}
\end{align}
The commanded normal acceleration is assumed to be realized instantaneously.
Substitution of \eqref{eq:pursuer_normal_acceleration_command} into the pursuer flight-path-angle dynamics therefore yields \eqref{eq:e_gamma_dynamics}.
Figure~\ref{fig:block_diagram} summarizes the resulting cascade structure, in which $n_{z,{\rm P}}$ drives $\gamma$ through the inner lead-angle-tracking loop, and $\gamma$ in turn drives $z$ through the outer constraint-lifted synthesis loop.
% \begin{align}
%     \dot e_\gamma
%     =
%     -k_\gamma e_\gamma,
%     \label{eq:heading_tracking_error_dynamics}
% \end{align}
% and hence
% $e_\gamma(t)=e_\gamma(0)e^{-k_\gamma t}$.

\begin{figure*}[h]
\centering
% \resizebox{2\columnwidth}{!}
{%
\begin{tikzpicture}[
    >=Latex,
    line cap=round,
    line join=round,
    font=\small,
    block/.style={
        draw,
        thick,
        minimum height=1.1cm,
        minimum width=2.1cm,
        align=center,
        fill=blue!4
    },
    sum/.style={
        draw,
        thick,
        circle,
        inner sep=0pt,
        minimum size=0.5cm
    },
    cascade/.style={
        ->,
        thick,
        blue!60!black
    },
    fbk/.style={
        ->,
        thick,
        gray!70
    },
    feedforward/.style={
    ->,
    thick,
    blue!60!black,
    dashed
},
]

% ============================================================
% MAIN CASCADE:  gamma_d -> gamma_c -> e_gamma -> n_{z,P} -> gamma -> z
% ============================================================

\node[block] (synth)
    {Constraint-Lifted\\Synthesis \\ \eqref{eq:desired_virtual_control}, \eqref{eq:desired_gamma}};

\node[block, right=0.5cm of synth] (filter)
    {Command \\ Filter \\ \eqref{eq:heading_command_filter}};

\node[sum, right=0.5cm of filter] (sum1) {};

\node[block, right=0.5cm of sum1] (control)
    {Normal-Accel.\\Command \\ \eqref{eq:pursuer_normal_acceleration_command}};

\node[block, right=0.7cm of control] (plant)
    {Engagement\\Dynamics \\ \eqref{eq:heading_dynamics}, \eqref{eq:relative_range_dynamics}--\eqref{eq:los_rate}};

\node[block, below=1.3cm of plant] (lift)
    {Constraint\\Lifting \\ \eqref{eq:lifted_directional_coordinate}};

% ============================================================
% FORWARD (CASCADE) PATH
% ============================================================

\draw[cascade]
    (synth) -- node[above, font=\scriptsize]{$\gamma_{\rm d}$}
    (filter);

\draw[cascade]
    (filter) -- node[above, font=\scriptsize]{$\gamma_{\rm c}$}
    (sum1);
% \node[font=\scriptsize] at ($(sum1)+(-0.22,0.08)$) {$-$};

\draw[cascade]
    (sum1) -- node[above, font=\scriptsize]{$e_\gamma$}
    (control);

\draw[cascade]
    (control) -- node[above, font=\scriptsize]{$n_{z,{\rm P}}$}
    (plant);

\draw[cascade]
    (plant.south) -- ++(0,-0.55)
    node[pos=0, right, font=\scriptsize]{}
    -| node[pos=0.02, left, font=\scriptsize]{$\delta$}
    (lift.north);

% ============================================================
% INNER FEEDBACK: gamma -> sum1  (e_gamma = gamma - gamma_c)
% ============================================================

\draw[fbk]
    (plant.north) |- ++(0,0.5)
    -| node[pos=0.15, above, font=\scriptsize]{$\gamma$}
    (sum1.north);
% \node[font=\scriptsize] at ($(sum1)+(0.1,0.22)$) {$+$};

% \dot\gamma_c also enters the control block
% dot gamma_c is a direct feedforward signal from the filter to the control block, not feedback
% dot gamma_c enters the control block directly, alongside phidot
\draw[cascade]
    (filter.south) |- ++(0,-0.55) -| (control.250);
\node[font=\scriptsize] at ($(control.250)+(-0.5,-0.35)$) {$\dot\gamma_{\rm c}$};

% ============================================================
% OUTER FEEDBACK: z (and delta) -> synthesis block
% ============================================================

\draw[fbk]
    (lift.west) -| ++(-5,0)
    -| node[pos=0.25, above, font=\scriptsize]{$z$}
    (synth.south);

% ============================================================
% EXOGENOUS TARGET INPUTS
% ============================================================

\node[left=0.4cm of synth, font=\scriptsize, align=center]
    (target) {$V_{\rm T}$,\\ $\omega_{\rm T}$};
\draw[->, thick]
    (target) -- (synth.west);

\node[below=1.3cm of control, font=\scriptsize, align=center]
    (phidot) {$\dot\phi$};
\draw[->, thick]
    (phidot) -- (control.south);

\end{tikzpicture}
}
\caption{Block diagram of the sector-constrained guidance law, showing the cascade structure $n_{z,{\rm P}}\rightarrow\gamma\rightarrow z$ (highlighted path) with inner lead-angle tracking and outer constraint-lifted synthesis feedback loops.}
\label{fig:block_diagram}
\end{figure*}

% \begin{remark}
% The pursuer thrust is prescribed as $T_{\rm P}=T_{{\rm P},0}$, and the pursuer speed evolves according to the longitudinal point-mass dynamics.
% The engagement is assumed to satisfy $V_{\rm P}(t)>V_{\rm T}(t)$ throughout the engagement.
% Thus, $n_{z,{\rm P}}$ is the only guidance control input.
% \end{remark}

\subsection{Constraint Satisfaction Under Filtered Tracking}
\label{subsec:constraint_satisfaction}

The synthesis in Section~\ref{sec:method} establishes that
$\dot z=-k_z z$ when $\gamma=\gamma_{\rm d}$ exactly.
Since $\gamma_{\rm d}$ is not directly tracked, but is instead
approximated by the command-filtered $\gamma_{\rm c}$, the actual
lead-angle tracking error $\gamma-\gamma_{\rm d}$ is nonzero in
general, as shown in \eqref{eq:perturbed_lifted_dynamics}.
The following proposition shows that, despite this filtering-induced
perturbation, the directional constraint \eqref{eq:directional_constraint}
remains satisfied throughout the engagement, provided the
lead-angle-tracking and command-filter gains are chosen sufficiently
large relative to the perturbation.

\begin{proposition}
\label{prop:constraint_satisfaction}
Consider the closed-loop system under the sector-constrained guidance law
and command filter \eqref{eq:heading_command_filter}.
Assume that, for $t\in[0,t_{\rm c}]$,
\[
    |\dot\gamma_{\rm d}(t)|\leq\bar d_\gamma,
    \qquad
    V_{\rm P}(t)\leq\overline V_{\rm P}.
\]
Let $\bar z>0$ and suppose that $|z(0)|<\bar z$.
If
\begin{align}
    \frac{\overline V_{\rm P}}{\rho_{\rm c}}
    \cosh^2
    \left(
        \frac{\bar z}{\delta_{\max}}
    \right)
    \left(
        |e_\gamma(0)|
        +
        \frac{\bar d_\gamma}{\lambda}
    \right)
    <
    k_z\bar z,
    \label{eq:filtered_constraint_condition}
\end{align}
then $|z(t)|<\bar z$ for all $t\in[0,t_{\rm c}]$ and, consequently,
\[
    |\delta(t)|
    <
    \delta_{\max}
    \tanh
    \left(
        \frac{\bar z}{\delta_{\max}}
    \right)
    <
    \delta_{\max}.
\]
\end{proposition}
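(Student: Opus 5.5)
Because $e_\gamma$ decays exponentially and $\gamma_{\rm c}$ lags $\gamma_{\rm d}$ by a bounded amount, the plan is to bound the total error $\gamma-\gamma_{\rm d}$ uniformly and then run a comparison (barrier) argument on \eqref{eq:perturbed_lifted_dynamics}. The argument has three steps.

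\emph{Step 1: bound the lead-angle error.} Write $\gamma-\gamma_{\rm d}=e_\gamma+(\gamma_{\rm c}-\gamma_{\rm d})$.
\begin{itemize}
\item By \eqref{eq:e_gamma_dynamics}, $|e_\gamma(t)|=|e_\gamma(0)|e^{-k_\gamma t}\le|e_\gamma(0)|$.
\item Set $\tilde\gamma\isdef\gamma_{\rm c}-\gamma_{\rm d}$. From \eqref{eq:heading_command_filter}, $\dot{\tilde\gamma}=-\lambda\tilde\gamma-\dot\gamma_{\rm d}$ with $\tilde\gamma(0)=0$.
\item Variation of constants then gives $|\tilde\gamma(t)|\le\int_0^t e^{-\lambda(t-s)}\bar d_\gamma\,ds\le\bar d_\gamma/\lambda$.
\end{itemize}
Since $\sin$ is $1$-Lipschitz, $|\sin\gamma-\sin\gamma_{\rm d}|\le|e_\gamma(0)|+\bar d_\gamma/\lambda\isdef\varepsilon$ on $[0,t_{\rm c}]$.

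\emph{Step 2: reduce to a scalar inequality.} Whenever $|z|\le\bar z$, the identity $1-\chi^2=1-\tanh^2(z/\delta_{\max})=\operatorname{sech}^2(z/\delta_{\max})$ gives $1/(1-\chi^2)\le\cosh^2(\bar z/\delta_{\max})$. Also $\rho\ge\rho_{\rm c}$ for $t\le t_{\rm c}$, since capture has not yet occurred, and $V_{\rm P}\le\overline V_{\rm P}$. Hence the perturbation term in \eqref{eq:perturbed_lifted_dynamics} has magnitude at most
\[
M\isdef\frac{\overline V_{\rm P}}{\rho_{\rm c}}\cosh^2\!\left(\frac{\bar z}{\delta_{\max}}\right)\varepsilon .
\]
Condition \eqref{eq:filtered_constraint_condition} says precisely that $M<k_z\bar z$.

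\emph{Step 3: barrier argument by contradiction.} Suppose $|z(t)|\ge\bar z$ for some $t\le t_{\rm c}$. By continuity of $z$ (valid while $|\delta|<\delta_{\max}$, which holds as long as $z$ is finite) and $|z(0)|<\bar z$, there is a first time $t^*>0$ with $|z(t^*)|=\bar z$ and $|z|<\bar z$ on $[0,t^*)$. On $[0,t^*]$ the Step 2 bound applies, so
\[
\frac{d}{dt}\tfrac12 z^2=-k_z z^2+z\,(\text{pert})\le |z|\,(M-k_z|z|).
\]
At $t^*$ this is at most $\bar z(M-k_z\bar z)<0$. By continuity, $z^2$ is strictly decreasing on a left neighborhood of $t^*$. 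Then $z^2(t)>z^2(t^*)=\bar z^2$ just before $t^*$, contradicting the choice of $t^*$. Therefore $|z(t)|<\bar z$ on $[0,t_{\rm c}]$. The bound on $\delta$ then follows from \eqref{eq:inverse_directional_transformation} and the strict monotonicity of $\tanh$.

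\emph{Main obstacle: well-posedness on $[0,t_{\rm c}]$.} The argument needs the closed-loop solution to exist on the whole interval. It also needs \eqref{eq:virtual_control_admissibility} to hold, so that $\gamma_{\rm d}$ is defined, and it needs the wrapping discontinuities to be avoided.
\begin{itemize}
\item I would handle this by taking the hypotheses as including existence of the solution on $[0,t_{\rm c}]$.
\item The same first-exit argument then rules out blow-up of $z$: $z$ stays bounded, so $\delta$ stays away from $\pm\delta_{\max}$ and the $\delta$-dynamics remain smooth.
\item I would also note that $\rho\ge\rho_{\rm c}>0$ prevents any singularity in $\dot\phi$.
\end{itemize}
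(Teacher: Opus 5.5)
Your proposal is correct and follows the paper's proof essentially step for step. You use the same filter-error bound $|\gamma_{\rm c}-\gamma_{\rm d}|\le\bar d_\gamma/\lambda$ from the zero initial condition, the same bound $|e_\gamma(t)|\le|e_\gamma(0)|$, the Lipschitz property of $\sin$, the $\cosh^2$ bound for $|z|\le\bar z$, and $\rho\ge\rho_{\rm c}$, followed by an inward-pointing boundary argument on $[-\bar z,\bar z]$. Your first-exit-time contradiction via $\frac{d}{dt}\tfrac12 z^2$ is a more explicit version of the paper's statement that $\dot z<0$ at $z=\bar z$ and $\dot z>0$ at $z=-\bar z$. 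Your well-posedness remarks (existence on $[0,t_{\rm c}]$, admissibility \eqref{eq:virtual_control_admissibility}, wrapping) spell out assumptions the paper leaves implicit.
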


\begin{proof}
    See Appendix \ref{app:proof_constraint_satisfaction}.
\end{proof}

Condition \eqref{eq:filtered_constraint_condition} is a sufficient condition ensuring that the stabilizing term in the lifted-coordinate dynamics dominates the perturbation introduced by command filtering and lead-angle tracking.
The condition is conservative and, since it depends on the a priori unknown quantities $\bar d_\gamma$, $\overline V_{\rm P}$, and $e_\gamma(0)$, is generally difficult to verify exactly before an engagement.
Its role is instead to establish existence: it shows that, for sufficiently large $k_\gamma$ and $\lambda$ relative to the target maneuver bound $\bar d_\gamma$, gains can always be chosen so that the filtering-induced perturbation does not violate the directional constraint.

In particular, the pursuer speed is bounded over the finite engagement, $\rho\geq\rho_{\rm c}>0$ prior to capture, and restricting $|z|\leq\bar z$ bounds the gain associated with the constraint-lifting transformation.
Furthermore, increasing the lead-angle-tracking gain $k_\gamma$ and the command-filter bandwidth $\lambda$ reduces the deviation between $\gamma$ and $\gamma_{\rm d}$.
Thus, \eqref{eq:filtered_constraint_condition} formalizes the requirement that the flight-path-angle dynamics be sufficiently fast relative to the constraint-lifted outer-loop dynamics.

\subsection{Closing Motion Toward Capture}
\label{subsec:closing_motion}

In addition to the directional constraint, the control problem in Section~\ref{sec:problem_formulation} requires capture in finite time, that is, $t_{\rm c}<\infty$.
The following proposition gives a sufficient condition under which the range $\rho$ is strictly
decreasing.

\begin{proposition}
\label{prop:closing_motion}
Consider the closed-loop system under the sector-constrained guidance
law and command filter \eqref{eq:heading_command_filter}.
If $V_{\rm P}(t)\cos\gamma(t)>V_{\rm T}(t)$ for $t\in[0,t_{\rm c}]$, then for $t\in[0,t_{\rm c}],$
\begin{align}
    \dot\rho(t)
    =
    V_{\rm T}(t)\cos\delta(t)
    -
    V_{\rm P}(t)\cos\gamma(t)
    <
    0
    .
    \label{eq:closing_rate}
\end{align}
\end{proposition}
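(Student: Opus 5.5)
The plan is to work directly from the range dynamics \eqref{eq:relative_range_dynamics}, which hold along the closed-loop trajectories by the kinematics \eqref{eq:position_dynamics}, independent of the guidance law. The equality in \eqref{eq:closing_rate} is therefore immediate. The argument only needs the strict inequality, and that reduces to comparing $V_{\rm T}\cos\delta$ with $V_{\rm T}$.

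First, $V_{\rm T}(t)>0$ by the model assumptions. Second, $\cos\delta(t)\leq 1$ for any real $\delta$. Hence $V_{\rm T}\cos\delta\leq V_{\rm T}$ for $t\in[0,t_{\rm c}]$. Combining this with the hypothesis $V_{\rm P}\cos\gamma>V_{\rm T}$ gives
\[
\dot\rho = V_{\rm T}\cos\delta - V_{\rm P}\cos\gamma \leq V_{\rm T}-V_{\rm P}\cos\gamma < 0 .
\]
Note that Proposition~\ref{prop:constraint_satisfaction} is not needed for the inequality itself. It only guarantees $|\delta|<\delta_{\max}<\pi/2$, so that $\cos\delta>0$. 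This refinement would sharpen the estimate to $\dot\rho\leq V_{\rm T}-V_{\rm P}\cos\gamma$ with margin $V_{\rm T}(1-\cos\delta)$, but it is not required.

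The only technical point, which is minor, is that $\rho$ must be differentiable for \eqref{eq:relative_range_dynamics} to make sense. On $[0,t_{\rm c}]$ we have $\rho\geq\rho_{\rm c}>0$, so $\rho=\|r\|$ is differentiable there. The wrapping in $\delta,\gamma$ is harmless because only $\cos\delta$ and $\cos\gamma$ enter, and these are invariant under $2\pi$ shifts. Thus \eqref{eq:relative_range_dynamics} holds everywhere on the interval, and the proposition follows.
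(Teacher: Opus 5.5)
Your argument is correct and is essentially the paper's proof: bound $\cos\delta\leq 1$ in \eqref{eq:relative_range_dynamics} to get $\dot\rho\leq V_{\rm T}-V_{\rm P}\cos\gamma$, then apply the hypothesis $V_{\rm P}\cos\gamma>V_{\rm T}$. Your aside about Proposition~\ref{prop:constraint_satisfaction} can be dropped, since knowing $\cos\delta>0$ does not tighten this upper bound, but it does not affect the proof.
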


\begin{proof}
Since $\cos\delta(t)\leq1$, \eqref{eq:relative_range_dynamics} gives
$\dot\rho(t)\leq V_{\rm T}(t)-V_{\rm P}(t)\cos\gamma(t)<0$ by hypothesis.
\end{proof}

Proposition~\ref{prop:closing_motion} shows that positive closing
motion follows once $\gamma$ remains near $\gamma_{\rm d}$, since
$\cos\gamma_{\rm d}>0$ under \eqref{eq:virtual_control_admissibility}.
A rigorous guarantee that $\cos\gamma(t)>0$ holds throughout the
engagement, and a corresponding bound on the finite capture time
$t_{\rm c}$, would require quantifying the lead-angle tracking error
against the admissibility margin in \eqref{eq:virtual_control_admissibility};
this analysis is left to future work.
In the numerical study of Section~\ref{sec:numerical_simulation}, the closing velocity $v_{\rm c} \isdef -\dot\rho$ remains positive throughout the engagement, and the pursuer achieves capture in finite time.

% \clearpage

\section{Numerical Simulation}
\label{sec:numerical_simulation}

This section numerically evaluates the proposed sector-constrained guidance (SCG) law using the nonlinear point-mass engagement dynamics developed in Section~\ref{sec:problem_formulation}.
The numerical study is designed to assess whether SCG achieves interception of a maneuvering target while maintaining the prescribed rear-approach constraint throughout the engagement.
For comparison, proportional guidance (PG) is evaluated under the same physical engagement conditions.

The engagement is initialized with pursuer position
$p_{\rm P}(0)=[0\;\;0]^{\rm T}\,\mathrm{m}$
and target position
$p_{\rm T}(0)=[5000\;\;300]^{\rm T}\,\mathrm{m}$,
which correspond to an initial pursuer--target separation of
$\rho(0)\approx5009\,\mathrm{m}$.
The initial target speed and flight-path angle are
$V_{\rm T}(0)=584\,\mathrm{m/s}$ and
$\theta_{\rm T}(0)=10^\circ$, respectively.
The initial pursuer speed and flight-path angle are
$V_{\rm P}(0)=900\,\mathrm{m/s}$ and
$\theta_{\rm P}(0)=0^\circ$, respectively.
The corresponding initial line-of-sight and directional angles are
$\phi(0)\approx3.43^\circ$ and
$\delta(0)\approx-6.57^\circ$, respectively.

To evaluate the guidance law against a maneuvering target, the target is commanded to follow a time-varying turn-rate profile,
\begin{align}
    \omega_{\rm T}(t)
        =
        0.080
        +
        0.0005\sin(0.85t)
        +
        0.008\cos(1.70t).
    \label{eq:numerical_target_turn_rate}
\end{align}
The superposition of the constant and oscillatory components produces a continuously varying target flight-path angle throughout the engagement.

The pursuer and target are simulated using the nonlinear point-mass dynamics described in Section~\ref{sec:problem_formulation}, with physical parameters selected consistently with~\cite{DorseyGoel2025}.
The target mass is $m_{\rm T}=10000\,\mathrm{kg}$ and its thrust is held constant at $T_{\rm T}=50000\,\mathrm{N}$.
The pursuer mass is $m_{\rm P}=204\,\mathrm{kg}$ and its thrust is held constant at $T_{\rm P}=15000\,\mathrm{N}$.
The simulations use gravitational acceleration $g=9.81\,\mathrm{m/s^2}$.

The prescribed rear-sector constraint is
$|\delta(t)|<\delta_{\max}$, where
$\delta_{\max}=20^\circ$.
% Thus, the engagement is initialized inside the admissible sector and relatively close to its lower boundary.
Interception is defined using the capture radius
$\rho_{\rm c}=15\,\mathrm{m}$.
For both SCG and PG, the pursuer normal-acceleration command is limited to
$|n_{z,{\rm P}}|\leq10g$.

For SCG, the command filter is initialized with
$\gamma_{\rm c}(0)=\gamma_{\rm d}(0),$ where $\gamma_{\rm d}(0)$ can be calculated using \eqref{eq:desired_virtual_control} and \eqref{eq:desired_gamma}.
The controller parameters are selected as
$k_z=0.50$, $\lambda=0.55$, and $k_\gamma=10$.
As shown in Figure~\ref{fig:block_diagram}, the guidance law has a cascade structure in which the lead-angle tracking dynamics form an inner loop and the constraint-lifted dynamics form an outer loop.
The gain $k_z$ therefore sets the response of the outer constraint-regulation dynamics, whereas $k_\gamma$ determines the convergence rate of the inner lead-angle-tracking loop.
Accordingly, $k_\gamma$ is selected substantially larger than $k_z$ so that the lead angle tracks the filtered command on a faster time scale than the directional constraint evolves.
The filter bandwidth $\lambda$ is selected to smooth the desired lead-angle command while retaining sufficiently fast command tracking.

For comparison, PG is implemented using the same nonlinear point-mass dynamics, initial conditions, target maneuver, and normal-acceleration limit.
Two navigation constants, $N_{\rm PG}=1.0$ and $N_{\rm PG}=2$, are considered.
Unlike SCG, the PG formulation does not explicitly account for the prescribed directional constraint.

Figure~\ref{fig:trajectory} compares the planar engagement trajectories for SCG and PG.
The SCG pursuer intercepts the maneuvering target while approaching from within the prescribed rear-approach section.
For $N_{\rm PG}=1.0$, PG reduces the range but does not achieve interception, with the triangle marking the point of closest approach.
For $N_{\rm PG}=2$, PG achieves interception but does not maintain the prescribed directional constraint.

Figure~\ref{fig:states} shows the corresponding directional angle, range, lead-angle tracking, and normal-acceleration command.
Under SCG, the directional angle remains within the prescribed bound
$|\delta(t)|<20^\circ$ throughout the engagement.
Under SCG, the directional angle remains within the prescribed bound
$|\delta(t)|<20^\circ$ throughout the engagement and remains within
the prescribed bounds until capture.
In contrast, both PG trajectories leave the prescribed directional sector.

\begin{figure}[h]
    \centering
    \includegraphics[width=\columnwidth]{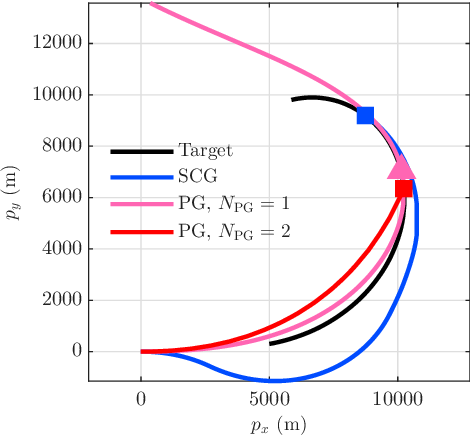}
    \caption{
    Planar trajectories of the target, SCG pursuer, and PG pursuer. 
    The SCG guidance law achieves interception while satisfying the directional constraint. 
    For $N_{\rm PG}=1$, PG does not achieve interception, with the triangle indicating the point of closest approach. 
    For $N_{\rm PG}=2$, PG achieves interception but violates the directional constraint.
    }
    \label{fig:trajectory}
\end{figure}

The range histories show that SCG reaches the prescribed capture radius
$\rho_{\rm c}=15\,\mathrm{m}$.
PG with $N_{\rm PG}=2$ also reaches the capture radius, whereas the $N_{\rm PG}=1.0$ trajectory reaches a minimum separation and then moves away from the target.
Although PG can achieve interception for a suitable navigation constant, neither PG case maintains the prescribed directional constraint.

The SCG lead-angle histories show the desired lead angle $\gamma_{\rm d}$, the filtered command $\gamma_{\rm c}$, and the actual lead angle $\gamma$.
The actual lead angle follows the filtered command throughout the engagement, while the command filter smooths the desired lead-angle trajectory.
To represent a physical actuator constraint, the pursuer normal-acceleration command is saturated at
$|n_{z,{\rm P}}|\leq10g$.
As shown in Figure~\ref{fig:states}, the commanded acceleration reaches this saturation limit during portions of the engagement.
Despite this actuator saturation, SCG maintains the prescribed directional constraint and achieves capture in the simulated engagement.

\begin{figure}
    \centering
    \includegraphics[width=\columnwidth]{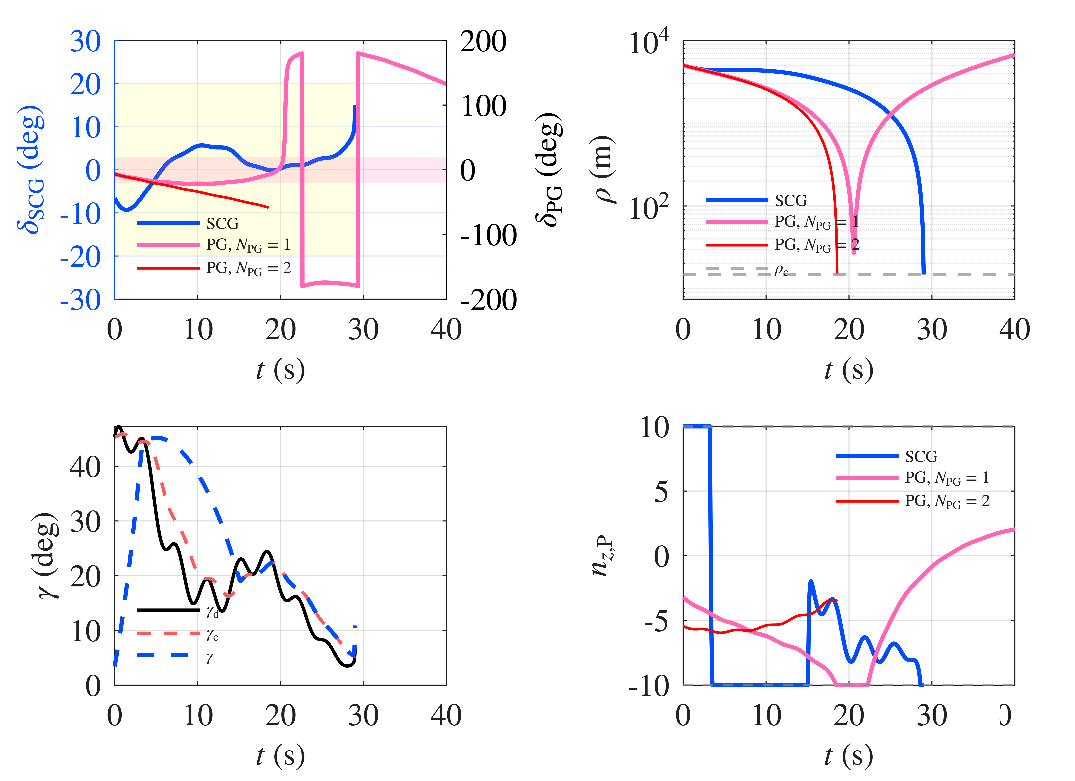}
    \caption{
    Directional angle, range, SCG lead-angle tracking, and normal-acceleration command. 
    The SCG guidance law maintains the directional angle within the prescribed safe region and achieves interception. 
    PG with $N_{\rm PG}=1$ does not achieve interception, whereas PG with $N_{\rm PG}=2$ achieves interception while violating the directional constraint. 
    The SCG lead angle tracks the commanded and desired lead angles while maintaining the normal-acceleration command within the prescribed limits.
    }
    \label{fig:states}
\end{figure}

Overall, the numerical results show that SCG achieves capture while maintaining the prescribed rear-sector constraint, including when the normal-acceleration command is subject to actuator saturation.

\section{Conclusions}
\label{sec:conclusions}

This paper developed a sector-constrained guidance (SCG) law for interception of a maneuvering target while maintaining a prescribed rear-approach sector.
The directional constraint was enforced by mapping the constrained engagement angle to an unconstrained lifted coordinate and designing the guidance law in the lifted space.
The resulting architecture combines an outer constraint-regulation loop with an inner lead-angle tracking loop and does not require online constrained optimization.
Numerical simulations using nonlinear point-mass dynamics showed that SCG achieves interception while maintaining the prescribed sector constraint, including in the presence of normal-acceleration saturation.
Comparisons with proportional guidance showed that proportional guidance may achieve interception but does not maintain the prescribed directional constraint.
Future work will consider extensions to time-varying directional constraints and additional actuator limitations.

\appendix

\section{Appendix}

\subsection{Proportional Guidance Baseline}
\label{app:PG}

For comparison with the sector-constrained guidance law, proportional
guidance is implemented using the same nonlinear point-mass pursuer
dynamics and the same initial conditions and target maneuver used in the
numerical study.

Let $N_{\rm PG}>0$ denote the proportional guidance gain.
The pursuer turn-rate command is
\begin{align}
    \omega_{{\rm P},{\rm PG}}
    \isdef
    N_{\rm PG}\dot\phi.
    \label{eq:pg_turn_rate_command}
\end{align}
Equating $\dot\theta_{\rm P}=\omega_{{\rm P},{\rm PG}}$ with the pursuer
flight-path-angle dynamics \eqref{eq:heading_dynamics} yields the
corresponding normal-acceleration command
\begin{align}
    n_{z,{\rm P},{\rm PG}}
    \isdef
    -V_{\rm P}
    \omega_{{\rm P},{\rm PG}}
    -
    g\cos\theta_{\rm P}.
    \label{eq:pg_normal_acceleration}
\end{align}

Unlike the sector-constrained guidance law, proportional guidance does not
explicitly incorporate the directional constraint
$|\delta|<\delta_{\max}$.
It is therefore used as a baseline to distinguish successful interception
from interception that also satisfies the prescribed rear-approach
geometry.

\subsection{Proof of Proposition~\ref{prop:constraint_satisfaction}}
\label{app:proof_constraint_satisfaction}

Define the command-filter error as
$
    e_{\rm c}
    \isdef
    \gamma_{\rm c}-\gamma_{\rm d}.
$
From \eqref{eq:heading_command_filter},
$
    \dot e_{\rm c}
    =
    -\lambda e_{\rm c}
    -
    \dot\gamma_{\rm d}.
$
Since $\gamma_{\rm c}(0)=\gamma_{\rm d}(0)$,
$e_{\rm c}(0)=0$.
Using
$|\dot\gamma_{\rm d}(t)|\leq\bar d_\gamma$ yields
\[
    |e_{\rm c}(t)|
    \leq
    \frac{\bar d_\gamma}{\lambda}
    \left(
        1-e^{-\lambda t}
    \right)
    \leq
    \frac{\bar d_\gamma}{\lambda}.
\]

From \eqref{eq:e_gamma_dynamics},
$
    |e_\gamma(t)|
    =
    |e_\gamma(0)|e^{-k_\gamma t}
    \leq
    |e_\gamma(0)|.
$
Since
$\gamma-\gamma_{\rm d}=e_\gamma+e_{\rm c}$,
\[
    |\gamma-\gamma_{\rm d}|
    \leq
    |e_\gamma(0)|
    +
    \frac{\bar d_\gamma}{\lambda}.
\]

Using the definition of the virtual control, the actual lifted-coordinate
dynamics are
\[
    \dot z
    =
    -k_z z
    +
    \frac{V_{\rm P}}{
        \rho(1-\chi^2)
    }
    \left(
        \sin\gamma-\sin\gamma_{\rm d}
    \right).
\]
Since
$|\sin a-\sin b|\leq|a-b|$,
\[
    |\sin\gamma-\sin\gamma_{\rm d}|
    \leq
    |e_\gamma(0)|
    +
    \frac{\bar d_\gamma}{\lambda}.
\]

From \eqref{eq:inverse_directional_transformation},
$\chi=\tanh(z/\delta_{\max})$.
Hence, for $|z|\leq\bar z$,
\[
    \frac{1}{1-\chi^2}
    =
    \cosh^2
    \left(
        \frac{z}{\delta_{\max}}
    \right)
    \leq
    \cosh^2
    \left(
        \frac{\bar z}{\delta_{\max}}
    \right).
\]
Furthermore,
$\rho(t)\geq\rho_{\rm c}$ for
$t\in[0,t_{\rm c}]$ and
$V_{\rm P}(t)\leq\overline V_{\rm P}$.
Therefore,
\begin{align}
    &\left|
        \frac{V_{\rm P}}{
            \rho(1-\chi^2)
        }
        \left(
            \sin\gamma-\sin\gamma_{\rm d}
        \right)
    \right|
    \nn \\
    &\leq
    \frac{\overline V_{\rm P}}{\rho_{\rm c}}
    \cosh^2
    \left(
        \frac{\bar z}{\delta_{\max}}
    \right)
    \left(
        |e_\gamma(0)|
        +
        \frac{\bar d_\gamma}{\lambda}
    \right).
    \nn
\end{align}

By \eqref{eq:filtered_constraint_condition}, at $z=\bar z$,
$\dot z<0$, whereas at $z=-\bar z$,
$\dot z>0$.
Thus, the vector field points inward at both boundaries of
$[-\bar z,\bar z]$.
Since $|z(0)|<\bar z$, the interval
$[-\bar z,\bar z]$ is forward invariant, and therefore
$|z(t)|<\bar z$ for all $t\in[0,t_{\rm c}]$.

Finally, from \eqref{eq:inverse_directional_transformation},
\begin{align}
    |\delta(t)|
    &=
    \delta_{\max}
    \left|
        \tanh
        \left(
            \frac{z(t)}{\delta_{\max}}
        \right)
    \right|
    <
    \delta_{\max}
    \tanh
    \left(
        \frac{\bar z}{\delta_{\max}}
    \right)
    \nn \\
    &<
    \delta_{\max}.
    \nn
\end{align}
Thus, the directional constraint is satisfied throughout the engagement.
\hfill$\square$

\printbibliography
\end{document}